\documentclass[reqno]{amsart}
\usepackage{stmaryrd}
\usepackage{silence}
\usepackage{upgreek}
\usepackage{amssymb}
\usepackage{faktor}
\usepackage[all,cmtip]{xy}
\usepackage{amsmath} 
\usepackage{mathrsfs}
\usepackage{tikz}
\usepackage{tikz-cd}
\usepackage{enumitem}
\usepackage{mathtools}
\usepackage[mathcal]{euscript}
\usepackage{graphicx}
\usepackage{url}
\usepackage{hyperref}
\usepackage[T1]{fontenc}

\usetikzlibrary{shapes.geometric}
\usetikzlibrary{decorations.markings}
\usetikzlibrary{patterns,decorations.pathreplacing}

\hypersetup{colorlinks=true,linkcolor=blue}

\usepackage{ragged2e}

\definecolor{coloryellow}{RGB}{240,228,66}
\definecolor{colorskyblue}{RGB}{86,180,233}
\definecolor{colorvermillion}{RGB}{213,94,0}

\newcommand{\graphfont}{\mathsf}

\newcommand{\graf}{\graphfont{\Gamma}}

\DeclareSymbolFont{sfletters}{OT1}{cmss}{m}{n}
\DeclareMathSymbol{\sTheta}{\mathord}{sfletters}{"02}

\theoremstyle{definition}
\newtheorem{definition}{Definition}[section]

\theoremstyle{plain}
\newtheorem{proposition}[definition]{Proposition}
\newtheorem{lemma}[definition]{Lemma}

\theoremstyle{remark}
\newtheorem{remark}[definition]{Remark}

\makeatletter
\@addtoreset{definition}{section}
\makeatother

\usepackage{marginnote}
    \DeclareFontFamily{U}{wncy}{}
    \DeclareFontShape{U}{wncy}{m}{n}{<->wncyr10}{}
    \DeclareSymbolFont{mcy}{U}{wncy}{m}{n}
    \DeclareMathSymbol{\Sha}{\mathord}{mcy}{"58}

\newsavebox{\foobox}

\title{Corrigendum to ``Asymptotic homology of graph braid groups''}
\author{Byung Hee An}
\author{Gabriel C. Drummond-Cole}
\author{Ben Knudsen}
\begin{document}
\begin{abstract}
We correct an error in the paper referred to in the title. Part of the argument is organized as a general method for establishing when (derived) functors factor through a fixed Serre subcategory, which may be of some more general interest.
\end{abstract}
\maketitle

\section{Introduction}

The purpose of this note is to correct the proof of \cite[Lem. 3.15]{AnDrummond-ColeKnudsen:AHGBG}. We begin by recalling some of the relevant concepts and notation. First, $\graf$ denotes a graph with set of edges $E(\graf)$, and $\graf_v$ denotes the graph obtained by adding a bivalent vertex $v$ to some (fixed) edge $e$ of $\graf$, then exploding this vertex---see \cite[\S2.2]{AnDrummond-ColeKnudsen:AHGBG}. The set of edges of $\graf_v$ is the same as that of $\graf$, except that the fixed edge of $\graf$ has been split into two edges, denoted $e$ and $e'$.

Second, we will be interested in (unordered) partitions $P$ of a given background set $X$, which is to say $P$ is a set of disjoint subsets, called blocks, whose union covers $X$. We write $x\sim_P y$ when $x$ and $y$ lie in the same block of $P$. In our setting, $X$ will be a set of generators of the polynomial ring $\mathbb{Z}[X]$, which we regard as a graded ring with $X$ in weight $1$. The partition $P$ determines a quotient map $\mathbb{Z}[X]\to \mathbb{Z}[P]$, by which we view the target as a module over the source.

When the background set is the set of edges of a graph, we may speak of ``tame'' partitions. The details of this concept are mostly irrelevant here, but the interested reader may consult \cite[Def. 3.2 ]{AnDrummond-ColeKnudsen:AHGBG}.\footnote{Such a reader will notice in this definition the presence of a parameter corresponding to the homological degree. For us, this parameter will always be fixed, so we suppress it for ease of language.} The modules determined by all tame partitions of $E(\graf)$, together with their graded shfits, generate a Serre subcategory, the category of tame $\mathbb{Z}[E(\graf)]$-modules. Writing $\mathrm{Tor}_{e-e'}(M)$ for the $(e-e')$-torsion submodule of $M$, our main result is the following.

\begin{proposition}\label{prop:main}
If $M$ is a tame $\mathbb{Z}[E(\graf_v)]$-module, then the $\mathbb{Z}[E(\graf)]$-modules $M/(e-e')$ and $\mathrm{Tor}_{e-e'}(M)$ are also tame.
\end{proposition}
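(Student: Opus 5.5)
We regard $M$ as a $\mathbb{Z}[E(\graf)]$-module by restricting along the ring map $\mathbb{Z}[E(\graf)]\to\mathbb{Z}[E(\graf_v)]$ that sends the fixed edge $e$ of $\graf$ to $e$. Both functors $F_0(M)=M/(e-e')$ and $F_1(M)=\mathrm{Tor}_{e-e'}(M)$ are then computed by the two-term complex $M\xrightarrow{e-e'}M$, as its cokernel and kernel. The plan is to prove a general principle and then check it on generators.

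\textbf{General principle.} Let $\mathcal{S}$ and $\mathcal{T}$ be Serre subcategories, with $\mathcal{S}$ generated under extensions, subquotients and shifts by a set $\mathcal{G}$. Suppose the functor $F_*$ turns short exact sequences into six-term exact sequences
\[F_1(A)\to F_1(B)\to F_1(C)\to F_0(A)\to F_0(B)\to F_0(C).\]
This holds here by the snake lemma applied to multiplication by $e-e'$. To show that $F_*$ carries $\mathcal{S}$ into $\mathcal{T}$, it suffices to show that $F_*$ carries every \emph{quotient} of a finite sum of shifted generators into $\mathcal{T}$. The reason is that, with finite generation understood throughout, every object of $\mathcal{S}$ has a finite filtration whose subquotients are such quotients. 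One can handle submodules via presentations: if $0\to K\to G\to N\to 0$ is exact with $G$ a sum of generators and $K\in\mathcal{S}$, then
\[F_1(K)\to F_1(G)\to F_1(N)\to F_0(K)\to F_0(G)\to F_0(N)\to 0.\]
The first step is to verify the claim for generators, and this yields $F_0(N)\in\mathcal{T}$ directly. For $F_1(N)$ we need $F_0(K)\in\mathcal{T}$. We obtain this by a noetherian induction on Krull dimension or support. If this induction proves awkward, an alternative is to show that every module in $\mathcal{S}$ admits a finite resolution by generators, and then run a dimension-shifting argument.

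\textbf{Generators.} Take $M=\mathbb{Z}[P]$ for a tame partition $P$ of $E(\graf_v)$. There are two cases.
\begin{itemize}
\item If $e\sim_P e'$, then $e-e'$ acts by zero. Hence $F_0(M)=F_1(M)=\mathbb{Z}[P]$, viewed over $\mathbb{Z}[E(\graf)]$ via the induced partition $\bar P$ of $E(\graf)$ obtained by merging $e$ into its block. In this case $\mathbb{Z}[P]\cong\mathbb{Z}[\bar P]$.
\item If $e\not\sim_P e'$, then $e-e'$ is a nonzerodivisor in the domain $\mathbb{Z}[P]$. Hence $F_1(M)=0$ and $F_0(M)=\mathbb{Z}[P']$, where $P'$ is the partition obtained by merging the blocks of $e$ and $e'$.
\end{itemize}
In the second case we still need to understand $\mathbb{Z}[P']$ over $\mathbb{Z}[E(\graf)]$. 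The restriction is $\mathbb{Z}[\bar P']$ when the images cover; otherwise it is a polynomial ring over that in the remaining generators. In the latter case it decomposes, as a graded module, into a direct sum of shifts of $\mathbb{Z}[\bar P']$, which need to be finitely many in each degree. To keep finiteness under control I would instead use the grading, or work with $M$ as a $\mathbb{Z}[E(\graf_v)]$-module modulo $e-e'$, which is naturally a module over $\mathbb{Z}[E(\graf_v)]/(e-e')\cong\mathbb{Z}[E(\graf)]$.

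\textbf{Main obstacle.} The expected hardest step is combinatorial: showing that the induced partitions $\bar P$ or $\bar P'$ of $E(\graf)$ are tame whenever $P$ is tame for $\graf_v$, using \cite[Def. 3.2]{AnDrummond-ColeKnudsen:AHGBG} together with the explicit relation between $\graf$ and $\graf_v$ (exploding a bivalent vertex). This is presumably where the original argument failed. I would first try to show directly that merging $e$ with $e'$ preserves the tameness conditions. If that fails in general, I would instead show that $\mathbb{Z}[\bar P']$ admits a finite filtration with tame subquotients, and absorb this into the Serre-closure argument of the general principle.
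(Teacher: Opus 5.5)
There is a genuine gap, and it is the same point the original argument missed: controlling the torsion functor requires controlling $F_0$ on submodules. Your reduction is fine in outline. If every tame module has a finite filtration whose layers are quotients $N=G/K$ of finite sums $G$ of shifted generators, then the six-term sequence makes $F_0(N)$ a quotient of $F_0(G)$. It also makes $F_1(N)$ an extension of a submodule of $F_0(K)$ by a quotient of $F_1(G)$, and extensions are handled the same way.

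But everything then depends on $F_0(K)\in\mathcal{T}$ for a submodule $K$ of a finite sum of generators. In the paper's language this is property \textbf{S(0)} for the generators, and you do not prove it. ``Noetherian induction on Krull dimension or support'' is not an argument here, and it is unclear what it would induct on: a nonzero submodule of the domain $\mathbb{Z}[P]$ has the same annihilator as $\mathbb{Z}[P]$, hence the same dimension and support. As you phrase it, with $K$ an arbitrary member of $\mathcal{S}$, the requirement is also circular. The fallback of finite resolutions by sums of shifted generators is not available either. The submodule of $\mathbb{Z}[P]\oplus\mathbb{Z}[Q]$ generated by $(1,1)$ is tame and torsion-free with degree-zero part $\mathbb{Z}$. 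So it is not even a quotient of such a sum unless some tame partition refines both $P$ and $Q$. Finally, your claim that every tame module has a filtration whose layers are \emph{quotients} (not merely subquotients) of sums of generators is justified only by ``finite generation understood throughout''. It too needs the fact that submodules of generators are such quotients.

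The missing idea, which closes both holes at once, is that the generators are closed under cyclic submodules. Since $\mathbb{Z}[P]$ is a domain, the submodule generated by a nonzero homogeneous $x\in\mathbb{Z}[P]$ is a shift of $\mathbb{Z}[P]$. By Noetherianity, a submodule $S\subseteq\mathbb{Z}[P]$ is a finite sum of such, hence a quotient of a finite sum of shifts of $\mathbb{Z}[P]$. By right exactness, $F_0(S)$ is then a quotient of a finite sum of shifts of the tame module $F_0(\mathbb{Z}[P])$. Filtering $K\subseteq G_1\oplus\cdots\oplus G_m$ by $K\cap(G_1\oplus\cdots\oplus G_j)$ reduces the general case to this one.

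With this supplied, your filtration-plus-presentation route goes through and is a reasonable hands-on alternative to the paper. The paper uses the same inputs: \textbf{O(0)} and \textbf{O(1)} from the generator computation, \textbf{S(0)} from cyclic submodules, and \textbf{S(1)} from left exactness of $F_1$. It then feeds them into a general induction on $i$ and on the number of subobject, quotient, and extension steps.

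The step you single out as the main obstacle is not one. That merging the blocks of $e$ and $e'$ in a tame partition of $E(\graf_v)$ gives a tame partition of $E(\graf)$ is Lemma 3.14 of the original paper, and the paper simply quotes it. You should also drop the restriction along $\mathbb{Z}[E(\graf)]\to\mathbb{Z}[E(\graf_v)]$, which can destroy finite generation. The functor in question is base change along $\mathbb{Z}[E(\graf_v)]\to\mathbb{Z}[E(\graf)]$.
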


The first claim of this result is the lemma of interest from \emph{loc. cit.} The second claim was mistakenly asserted there to be obvious and was used to deduce the first; in fact, the two are both logically equivalent and of equivalent difficulty. 

Briefly, our strategy will be to observe that both claims hold for the generators of the category of tame modules, then to argue by induction on a natural filtration of that category. It is worth emphasizing that Noetherianity, which made no appearance in the erroneous argument, is used in an essential way here.

In more detail, the argument is divided into three parts, which correspond to the three short sections following the introduction. The first part, valid in great generality, develops a framework for establishing the factorization of a right exact functor, and its derived functors, through a Serre subcategory of the target. The second part, still somewhat general, applies this framework to the setting of modules over a Noetherian ring. The third part specializes to the setting of \emph{loc. cit.} to prove the result in question using what has come before.

\subsection{Conventions} Gradings of rings and modules are by ``weight'' rather than ``degree,'' which is to say that we use the symmetry isomorphism of the tensor product incorporating no Koszul signs. Thus, commutative graded rings are strictly commutative rather than ``graded commutative.''

\subsection{Acknowledgements} The third author benefited from the support of NSF grant DMS 2551600.

\section{Factoring through a Serre subcategory}

In this section, we fix a right exact functor $F:\mathcal{A}\to \mathcal{B}$ between Abelian categories, where $\mathcal{A}$ has enough projectives, together with a subcategory $\mathcal{B}_0\subseteq\mathcal{B}$. Denoting the $i$th left derived functor of $F$ by $F_i$, we will be interested in understanding when $F_i$ factors through $\mathcal{B}_0$, perhaps after restriction to a subcategory of its domain.

\begin{definition}
With respect to a subcategory $\mathcal{C}\subseteq \mathcal{A}$, we say that $F$ has property
\begin{itemize}
\item[{\bf O(i)}] if $F_i(A)\in\mathcal{B}_0$ whenever $A$ is an {\bf{object}} of $\mathcal{C}$;
\item[{\bf Q(i)}] if $F_i(A)\in\mathcal{B}_0$ whenever $A$ is a {\bf{quotient}} of an object of $\mathcal{C}$;
\item[{\bf S(i)}] if $F_i(A)\in\mathcal{B}_0$ whenever $A$ is a {\bf{subobject}} of an object of $\mathcal{C}$; and
\item[{\bf E(i)}] if $F_i(A)\in\mathcal{B}_0$ whenever $A$ is an {\bf{extension}} of objects of $\mathcal{C}$.
\end{itemize}
By convention, each condition is vacuous for $i<0$.
\end{definition}

We record the following simple observation concerning these properties.

\begin{lemma}\label{lem:property lemma}
Suppose that $\mathcal{B}_0$ is a Serre subcategory. 
\begin{enumerate}
\item If property {\bf O(i)} holds with respect to $\mathcal{C}$, then so does property {\bf{E(i)}}.
\item If properties {\bf O(i)} and {\bf S(i-1)} hold with respect to $\mathcal{C}$, then so does property {\bf Q(i)}.
\end{enumerate}
\end{lemma}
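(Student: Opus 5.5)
Both parts rest on the long exact sequence of left derived functors attached to a short exact sequence in $\mathcal{A}$, which is available because $\mathcal{A}$ has enough projectives. We combine it with the closure of the Serre subcategory $\mathcal{B}_0$ under subobjects, quotients and extensions. Specifically, if $B'\to B\to B''$ is exact at $B$ with $B',B''\in\mathcal{B}_0$, then $B\in\mathcal{B}_0$. To see this, note that $B$ is an extension of the image of $B\to B''$, a subobject of $B''$, by the image of $B'\to B$, a quotient of $B'$.

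For (1), let $0\to A'\to A\to A''\to 0$ be exact with $A',A''$ objects of $\mathcal{C}$. The relevant segment of the long exact sequence is
\[
F_i(A')\to F_i(A)\to F_i(A'').
\]
By {\bf O(i)}, both outer terms lie in $\mathcal{B}_0$, so the observation above gives $F_i(A)\in\mathcal{B}_0$, which is {\bf E(i)}. (For $i=0$ this is just right exactness of $F$ together with the same observation.)

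For (2), let $A$ be a quotient of an object $C$ of $\mathcal{C}$, and write $0\to K\to C\to A\to 0$ for the resulting short exact sequence, so that $K$ is a subobject of $C$. The relevant segment of the long exact sequence is
\[
F_i(C)\to F_i(A)\to F_{i-1}(K).
\]
Here $F_i(C)\in\mathcal{B}_0$ by {\bf O(i)}, and $F_{i-1}(K)\in\mathcal{B}_0$ by {\bf S(i-1)}. By the observation, $F_i(A)\in\mathcal{B}_0$, which is {\bf Q(i)}. When $i=0$ the right-hand term is $F_{-1}(K)=0$, which matches the convention that {\bf S(-1)} is vacuous, so $F_0(C)\to F_0(A)$ is an epimorphism and its target is a quotient of an object of $\mathcal{B}_0$.

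There is no real obstacle here. The only point requiring care is that $\mathcal{B}_0$ is closed under the "middle term of an exact sequence" operation, which is exactly where the Serre hypothesis is used, and that the derived functors are taken with the indexing convention $F_{-1}=0$.
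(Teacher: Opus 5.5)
Your proof is correct and follows the paper's argument exactly. It uses the same segments $F_i(A')\to F_i(A)\to F_i(A'')$ and $F_i(C)\to F_i(A)\to F_{i-1}(K)$ of the long exact sequence together with the Serre property of $\mathcal{B}_0$, and your justification that $\mathcal{B}_0$ is closed under middle terms of exact sequences spells out a step the paper leaves implicit.
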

\begin{proof}
We begin with an exact sequence $0\to A\to B\to C\to 0$. For the first claim, consider the resulting exact sequence $F_i(A)\to F_i(B)\to F_i(C)$. By {\bf O(i)}, if $A$ and $C$ lie in $\mathcal{C}$, then $F_i(A)$ and $F_i(C)$ lie in $\mathcal{B}_0$, and the claim follows by our assumption that $\mathcal{B}_0$ is Serre. For the second claim, consider the resulting exact sequence $F_i(B)\to F_i(C)\to F_{i-1}(A)$. By {\bf O(i)} and {\bf S(i-1)}, if $B$ lies in $\mathcal{C}$, then $F_i(B)$ and $F_{i-1}(A)$, respectively, lie in $\mathcal{B}_0$ and the claim follows as before.
\end{proof}

To state the main result of this section, we fix a subcategory $\mathcal{G}\subseteq \mathcal{A}$ and write $\mathcal{A}_0\subseteq \mathcal{A}$ for the Serre subcategory it generates.

\begin{proposition}\label{prop:properties}
Suppose that $\mathcal{B}_0$ is a Serre subcategory. If properties {\bf O(i)} and {\bf S(i)} hold with respect to $\mathcal{G}$ for every $i\geq0$, then property {\bf O(i)} holds with respect to $\mathcal{A}_0$ for every $i\geq0$.
\end{proposition}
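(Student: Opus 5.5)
The plan is to describe $\mathcal{A}_0$ through an explicit filtration and then prove the property by induction along it. Set $\mathcal{G}_0=\mathcal{G}$. For $n\geq0$, let $\mathcal{G}_{n+1}$ be the collection of all objects that are subobjects, quotients, or extensions of objects of $\mathcal{G}_n$. By \emph{extension} we mean the middle term of a short exact sequence whose outer terms lie in $\mathcal{G}_n$. We include $\mathcal{G}_n\subseteq\mathcal{G}_{n+1}$, for example via trivial extensions by $0$, and we may assume $0\in\mathcal{G}$, since $F_i(0)=0\in\mathcal{B}_0$. The union $\bigcup_n\mathcal{G}_n$ is closed under subobjects, quotients and extensions, because any such operation uses only finitely many objects, each of which lies in some $\mathcal{G}_n$. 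The union contains $\mathcal{G}$ and lies in $\mathcal{A}_0$, so it equals $\mathcal{A}_0$. It therefore suffices to prove by induction on $n$ that properties {\bf O(i)} and {\bf S(i)} hold with respect to $\mathcal{G}_n$ for every $i\geq0$. Property {\bf O(i)} with respect to $\mathcal{A}_0$ then follows.

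The base case $n=0$ is exactly the hypothesis. For the inductive step, assume {\bf O(i)} and {\bf S(i)} hold with respect to $\mathcal{G}_n$ for all $i$. Property {\bf O(i)} for $\mathcal{G}_{n+1}$ has three parts:
\begin{itemize}
\item Subobjects are covered by {\bf S(i)} for $\mathcal{G}_n$.
\item Extensions are covered by Lemma~\ref{lem:property lemma}(1).
\item Quotients are covered by Lemma~\ref{lem:property lemma}(2), which uses {\bf O(i)} and {\bf S(i-1)} for $\mathcal{G}_n$.
\end{itemize}

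The main obstacle is property {\bf S(i)} for $\mathcal{G}_{n+1}$, i.e.\ controlling subobjects of new objects. There are three cases.
\begin{itemize}
\item A subobject of a subobject of an object of $\mathcal{G}_n$ is again a subobject of that object, so {\bf S(i)} for $\mathcal{G}_n$ applies.
\item Let $A\subseteq E$, where $0\to X\to E\to Y\to 0$ with $X,Y\in\mathcal{G}_n$. Then $A$ sits in an exact sequence $0\to A\cap X\to A\to A'\to 0$, where $A'$ is the image of $A$ in $Y$. Both $A\cap X$ and $A'$ are subobjects of objects of $\mathcal{G}_n$, so applying {\bf S(i)} to the long exact sequence together with the Serre property gives $F_i(A)\in\mathcal{B}_0$.
\item Let $A\subseteq B/K$ with $B\in\mathcal{G}_n$. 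Then $A=\tilde A/K$, where $\tilde A\subseteq B$ is the preimage of $A$, so $A$ is a quotient of a subobject of an object of $\mathcal{G}_n$. The exact sequence $F_i(\tilde A)\to F_i(A)\to F_{i-1}(K)$ has $\tilde A$ and $K$ both subobjects of $B$, so {\bf S(i)} and {\bf S(i-1)} for $\mathcal{G}_n$ put the outer terms in $\mathcal{B}_0$. Hence $F_i(A)\in\mathcal{B}_0$.
\end{itemize}
For $i=0$ the term $F_{-1}$ vanishes, matching the convention that the conditions are vacuous for $i<0$.

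With these cases the induction closes, and the proposition follows. I expect the bookkeeping of the quotient case of {\bf S(i)} to be the only delicate point: one must pass to the preimage $\tilde A$ so that the hypothesis {\bf S}, rather than {\bf Q}, is what gets used.
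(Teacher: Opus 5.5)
Your proof is correct, but it is organized differently from the paper's.

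\textbf{The paper's route.} In Lemma~\ref{lem:other induction}, the paper handles a subobject of a level-$(n+1)$ object by rewriting it as a quotient or an extension of level-$(n+1)$ objects (the pullback $P$, and the pair $P$, $S/P$). It then invokes \textbf{Q(i)} and \textbf{E(i)} with respect to $\mathcal{A}_0^{(n+1)}$. Since \textbf{Q(i)} at level $n+1$ is obtained from \textbf{S(i-1)} at level $n+1$, this forces a double induction on $(i,n)$ through the statements $P(i,n-1)$ and $P(i-1,n)$, with all four properties carried along.

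\textbf{Your route.} You observe that the same auxiliary objects are already subobjects of level-$n$ objects. The preimage $\tilde A$ (which is the paper's pullback $P$) and the kernel $K$ both sit inside $B$. Likewise, $A\cap X$ and the image $A'$ sit inside $X$ and $Y$. So only \textbf{S(i)} and \textbf{S(i-1)} at level $n$ are needed, and a single induction on $n$ closes, carrying just \textbf{O} and \textbf{S} for all $i$ at once.

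\textbf{Comparison.} Your argument is shorter. It removes the induction on $i$, and it removes the auxiliary properties \textbf{Q} and \textbf{E} at level $n+1$; these follow from the conclusion anyway by Remark~\ref{remark:implies}. The paper's version is more modular, packaging each step as a standalone lemma about the four properties, but it is logically heavier than the statement requires.

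\textbf{A small simplification.} You do not need to assume $0\in\mathcal{G}$ to get $\mathcal{G}_n\subseteq\mathcal{G}_{n+1}$, since every object is a subobject of itself.
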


\begin{remark}\label{remark:implies}
Since $\mathcal{A}_0$ is Serre, it follows in the situation of Proposition \ref{prop:properties} that the other three properties also hold for every $i\geq0$.
\end{remark}

For the proof, we will make use of a sequence of categories interpolating between $\mathcal{G}$ and $\mathcal{A}_0$.

\begin{definition}\label{def:type}
We say that $A\in\mathcal{A}$ is of \emph{type $0$} if $A\in\mathcal{G}$. Recursively, we say that $A$ is of type $n$ if it is a subobject, quotient, or extension of objects of type $n-1$.
\end{definition}

Writing $\mathcal{A}_0^{(n)}\subseteq \mathcal{A}$ for the full subcategory spanned by all objects of type $n$, we have $\mathcal{G}=\mathcal{A}_0^{(0)}$ and $\mathcal{A}_0^{(n)}\subseteq\mathcal{A}_0^{(n+1)}$. By convention, we set $\mathcal{A}_0^{(n)}=\{0\}$ for $n<0$.

\begin{lemma}\label{lem:union}
We have $\mathcal{A}_0=\bigcup_{n\geq0}\mathcal{A}_0^{(n)}$.
\end{lemma}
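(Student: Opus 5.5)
We prove the two inclusions separately; write $\mathcal{U}=\bigcup_{n\geq0}\mathcal{A}_0^{(n)}$.

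For $\mathcal{U}\subseteq\mathcal{A}_0$, we show by induction on $n$ that $\mathcal{A}_0^{(n)}\subseteq\mathcal{A}_0$. The case $n=0$ is the inclusion $\mathcal{G}\subseteq\mathcal{A}_0$, which holds because $\mathcal{A}_0$ is generated by $\mathcal{G}$. For the inductive step, suppose every object of type $n-1$ lies in $\mathcal{A}_0$. An object of type $n$ is a subobject, quotient, or extension of such objects. Since $\mathcal{A}_0$ is a Serre subcategory, it is closed under all three operations, so the object of type $n$ also lies in $\mathcal{A}_0$.

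For the reverse inclusion, it suffices to show that $\mathcal{U}$ is a Serre subcategory. Indeed, $\mathcal{U}$ contains $\mathcal{G}$, and $\mathcal{A}_0$ is the smallest Serre subcategory containing $\mathcal{G}$, so this gives $\mathcal{A}_0\subseteq\mathcal{U}$. A subcategory is Serre if it contains $0$ and is closed under subobjects, quotients, and extensions; closure under isomorphism is included in closure under subobjects.

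Closure under subobjects and quotients is immediate. If $A$ has type $n$, then any subobject or quotient of $A$ has type $n+1$ by definition.

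Closure under extensions is the only step needing care. Suppose $0\to A\to B\to C\to 0$ is exact with $A$ of type $m$ and $C$ of type $n$. We use the monotonicity $\mathcal{A}_0^{(k)}\subseteq\mathcal{A}_0^{(k+1)}$, which follows from the definition because any object $X$ is an extension $0\to X\to X\to 0\to 0$ or a quotient of itself. Applying it repeatedly, both $A$ and $C$ have type $\max(m,n)$. Hence $B$, being an extension of objects of type $\max(m,n)$, has type $\max(m,n)+1$.

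Finally, $0\in\mathcal{U}$. If $\mathcal{G}$ is empty there is nothing to prove, since then $\mathcal{A}_0=\{0\}$ under the natural convention. Otherwise $0$ is a quotient of any object of $\mathcal{G}$, so $0$ has type $1$.

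The only point that needs attention is this bookkeeping of types: the argument depends on monotonicity, so that objects of different types can be brought to a common type before taking an extension. Everything else follows directly from the definitions.
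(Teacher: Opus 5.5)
Your proof is correct and follows the paper's argument, with the details written out. Induction on $n$ gives $\bigcup_n\mathcal{A}_0^{(n)}\subseteq\mathcal{A}_0$, and minimality of $\mathcal{A}_0$ gives the reverse inclusion once the union is shown to be Serre; the paper calls that step immediate, and you verify it through the monotonicity $\mathcal{A}_0^{(k)}\subseteq\mathcal{A}_0^{(k+1)}$ and by noting that $0$ is a quotient of a generator. The only slip is your aside on empty $\mathcal{G}$: there the union is itself empty rather than $\{0\}$, so that degenerate case holds only by convention, not because there is nothing to prove.
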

\begin{proof}
Since $\mathcal{A}_0$ is a Serre subcategory containing $\mathcal{G}$, it follows by induction that $\mathcal{A}_0$ contains $\mathcal{A}_0^{(n)}$ for every $n\geq0$; therefore, by the definition of $\mathcal{A}_0$, it suffices to show that the union is a Serre subcategory, but this is immediate from Definition \ref{def:type}.
\end{proof}

The main technical step in the argument is the following.

\begin{lemma}\label{lem:other induction}
If properties {\bf Q(i)}, and {\bf E(i)} hold with respect to $\mathcal{A}_0^{(n+1)}$, and if property {\bf S(i)} holds with respect to $\mathcal{A}_0^{(n)}$, then property {\bf S(i)} holds with respect to $\mathcal{A}_0^{(n+1)}$.
\end{lemma}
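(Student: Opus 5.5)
We must show: if $A$ is a subobject of an object $B$ of type $n+1$, then $F_i(A)\in\mathcal{B}_0$. The plan is to case on how $B$ acquires type $n+1$: by Definition \ref{def:type}, $B$ is a subobject, a quotient, or an extension of objects of type $n$. In each case we reduce to one of the three hypotheses, namely {\bf S(i)} for $\mathcal{A}_0^{(n)}$, or {\bf Q(i)} or {\bf E(i)} for $\mathcal{A}_0^{(n+1)}$.

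\emph{Subobject case.} If $B\subseteq B'$ with $B'$ of type $n$, then $A$ is a subobject of $B'$. Property {\bf S(i)} for $\mathcal{A}_0^{(n)}$ gives $F_i(A)\in\mathcal{B}_0$ directly.

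\emph{Quotient case.} Suppose $B=B'/K$ with $B'$ of type $n$, and let $p\colon B'\to B$ be the projection. Form the pullback $A'=p^{-1}(A)\subseteq B'$. Then $A'$ is a subobject of an object of type $n$, hence itself of type $n+1$, and $A=A'/K$ is a quotient of $A'$. Property {\bf Q(i)} for $\mathcal{A}_0^{(n+1)}$ then gives $F_i(A)\in\mathcal{B}_0$.

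\emph{Extension case.} Suppose $0\to B_1\to B\to B_2\to 0$ is exact with $B_1,B_2$ of type $n$. Intersecting with $A$ gives an exact sequence $0\to A\cap B_1\to A\to A_2\to 0$, where $A_2$ is the image of $A$ in $B_2$. Here $A\cap B_1$ is a subobject of $B_1$, and $A_2$ is a subobject of $B_2$. Both are therefore of type $n+1$, since subobjects of type $n$ objects are of type $n+1$. Hence $A$ is an extension of objects of $\mathcal{A}_0^{(n+1)}$, and property {\bf E(i)} for $\mathcal{A}_0^{(n+1)}$ yields $F_i(A)\in\mathcal{B}_0$.

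The only delicate point is the quotient case. There we must lift a subobject of a quotient to a subobject of the original object, via the pullback, and then check that this object lies in the right category, $\mathcal{A}_0^{(n+1)}$ rather than $\mathcal{A}_0^{(n)}$. That is exactly why the hypotheses on {\bf Q(i)} and {\bf E(i)} are stated at level $n+1$ and not at level $n$.

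Notice also that no long exact sequence is needed in any case. Each case reduces directly to one of the hypotheses applied to the single index $i$.
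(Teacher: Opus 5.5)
Your proof is correct and follows the paper's argument essentially step for step. The paper uses the same three-way case split on how the ambient object acquires type $n+1$. In the quotient case it uses the same pullback, and it notes explicitly that in an Abelian category the pullback of an epimorphism is an epimorphism, a fact your identification $A\cong A'/K$ relies on implicitly. In the extension case it uses the same intersection-and-image decomposition, so the argument lands in {\bf S(i)} at level $n$ and in {\bf Q(i)} and {\bf E(i)} at level $n+1$, exactly as you have it.
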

\begin{proof}
Given a subobject $S\subseteq A$ with $A$ of type $n+1$, we wish to show that $F_i(S)\in \mathcal{B}_0$. There are three cases to consider. First, if $A$ is a subobject of the type $n$ object $B$, then $S$ is also a subobject of $B$; therefore, since property {\bf S(i)} holds with respect to $\mathcal{A}_0^{(n)}$, the desired conclusion follows. Second, suppose that $A$ is a quotient of the type $n$ object $B$, and consider the pullback diagram
\[\xymatrix{
P\ar[r]\ar[d]& B\ar[d]\\
S\ar[r]&A.
}\] Since the lower arrow is a monomorphism, the upper arrow is so as well, so $P\in\mathcal{A}_0^{(n+1)}$. Since the righthand arrow is an epimorphism, the lefthand arrow is so as well (we use that $\mathcal{A}$ is Abelian), so the desired conclusion follows from property {\bf Q(i)} with respect to $\mathcal{A}_0^{(n+1)}$. Finally, suppose given the exact sequence 
$0\to B\to A\to C\to 0$ with $B$ and $C$ of type $n$. We enlarge this sequence to the commutative diagram with exact rows
\[\xymatrix{
0\ar[r]&P\ar[d]\ar[r]&S\ar[d]\ar[r]&S/P\ar[r]\ar[d]&0\\
0\ar[r]&B\ar[r]&A\ar[r]& C\ar[r]&0,
}\] where $P$ is defined by requiring the lefthand square to be a pullback. By the first isomorphism theorem, the map $S/P\to C$ is isomorphic to the inclusion of the image of $S\to C$; in particular, it is a monomorphism, and we conclude that $P$ and $S/P$ are both of type $n+1$. Since property {\bf E(i)} holds with respect to $\mathcal{A}_0^{(n+1)}$, the claim follows.
\end{proof}

We also record the following lemma, which is immediate from the definitions.

\begin{lemma}\label{lem:induction}
If properties {\bf O(i)}, {\bf Q(i)}, {\bf S(i)}, and {\bf E(i)} hold with respect to $\mathcal{A}_0^{(n)}$, then property {\bf O(i)} holds with respect to $\mathcal{A}_0^{(n+1)}$.
\end{lemma}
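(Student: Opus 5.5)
The plan is to unwind Definition \ref{def:type} and observe that each clause of the recursive definition is covered by exactly one of the four hypotheses. Let $A$ be an object of type $n+1$; we must show $F_i(A)\in\mathcal{B}_0$. By Definition \ref{def:type}, one of three things holds: $A$ is a subobject of an object of type $n$, a quotient of an object of type $n$, or an extension of two objects of type $n$.

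In each case we appeal to the hypothesis named after that clause.
\begin{itemize}
\item If $A$ is a subobject of some $B\in\mathcal{A}_0^{(n)}$, then $F_i(A)\in\mathcal{B}_0$ by property \textbf{S(i)} with respect to $\mathcal{A}_0^{(n)}$.
\item If $A$ is a quotient of some $B\in\mathcal{A}_0^{(n)}$, then property \textbf{Q(i)} with respect to $\mathcal{A}_0^{(n)}$ gives the same conclusion.
\item If there is an exact sequence $0\to B\to A\to C\to 0$ with $B,C\in\mathcal{A}_0^{(n)}$, then property \textbf{E(i)} with respect to $\mathcal{A}_0^{(n)}$ applies.
\end{itemize}
Property \textbf{O(i)} with respect to $\mathcal{A}_0^{(n)}$ is not needed in this argument.

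There is no genuine obstacle. The only point to check is the edge case $n=-1$, where $\mathcal{A}_0^{(-1)}=\{0\}$ by convention. Here we should confirm that the claim is consistent with $\mathcal{A}_0^{(0)}=\mathcal{G}$. That base case is really supplied separately by hypothesis in Proposition \ref{prop:properties}, so the lemma will only be invoked for $n\geq0$, where the case analysis above is complete.
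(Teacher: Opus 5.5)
Your proof is correct and is the argument the paper intends when it calls this lemma immediate from the definitions. An object of type $n+1$ is a subobject, quotient, or extension of objects of type $n$, and {\bf S(i)}, {\bf Q(i)}, {\bf E(i)} with respect to $\mathcal{A}_0^{(n)}$ handle those three cases; you are also right that {\bf O(i)} is not needed. On the edge case, the statement is actually false for $n=-1$ under the convention $\mathcal{A}_0^{(-1)}=\{0\}$, because $\mathcal{G}$ is not built recursively from $\{0\}$. This is harmless, as you say, because the proof of Proposition \ref{prop:properties} only invokes the lemma for $n\geq0$ and establishes $P(i,0)$ separately.
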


\begin{proof}[Proof of Proposition \ref{prop:properties}]
Let $P(i,n)$ be the statement that properties {\bf O(i)}, {\bf Q(i)}, {\bf S(i)}, and {\bf E(i)} hold with respect to $\mathcal{A}_0^{(n)}$. By Lemma \ref{lem:union}, the claim is equivalent to the validity of $P(i,n)$ for every $i,n\geq0$ (see Remark \ref{remark:implies}).

We begin by observing that the statements $P(i,n-1)$ and $P(i-1,n)$ together imply $P(i,n)$. Assuming $P(i,n-1)$, Lemma \ref{lem:induction} implies that property {\bf O(i)} holds with respect to $\mathcal{A}^{(n)}_0$, hence property {\bf E(i)} by Lemma \ref{lem:property lemma}(1). Assuming $P(i-1,n)$ as well, Lemma \ref{lem:property lemma}(2) grants that property {\bf Q(i)} holds with respect to $\mathcal{A}^{(n)}_0$. Finally, $P(i,n-1)$ and Lemma \ref{lem:other induction} now imply $P(i,n)$.

Since $P(i,0)$ holds for every $i\geq0$ by our assumption and Lemma \ref{lem:property lemma}, it therefore suffices to show that $P(0,n)$ holds for every $n\geq0$. Since $P(-1,n)$ is vacuous, the argument above shows that $P(0,n-1)$ implies $P(0,n)$, so induction on $n$ reduces the claim to $P(0,0)$, which holds by assumption.
\end{proof}

\section{The role of Noetherianity}

We now specialize the setup of the previous section by taking $\mathcal{A}$ and $\mathcal{B}$ to be the categories of finitely generated graded modules over the commutative graded rings $R$ and $S$, respectively. We fix a set of generators $\mathcal{G}\subseteq \mathcal{A}$, a right exact functor $F:\mathcal{A}\to\mathcal{B}$, and a Serre subcategory $\mathcal{B}_0\subseteq\mathcal{B}$.

\begin{proposition}\label{prop:cyclic}
Suppose that $R$ is Noetherian. If $\mathcal{G}$ is closed under the formation of cyclic submodules, then property {\bf S(0)} holds with respect to $\mathcal{G}$ provided property {\bf O(0)} does so.
\end{proposition}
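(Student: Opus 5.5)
We need to show: if $M\in\mathcal{G}$ and $N\subseteq M$ is a submodule, then $F_0(N)=F(N)\in\mathcal{B}_0$. The plan is to use Noetherianity to write $N$ as a finite iterated extension of cyclic submodules of $M$, each of which lies in $\mathcal{G}$. We then push membership in $\mathcal{B}_0$ up this filtration using right exactness of $F$.

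First, since $R$ is Noetherian and $M$ is finitely generated, the submodule $N$ is finitely generated. Choose homogeneous generators $x_1,\dots,x_k$ of $N$; homogeneity is available because everything is graded. Set $N_j=\langle x_1,\dots,x_j\rangle\subseteq N$, so that
\[
0=N_0\subseteq N_1\subseteq\cdots\subseteq N_k=N.
\]
Each $N_j\to N_j/N_{j-1}$ is then surjective. More usefully, the map $Rx_j\to N_j/N_{j-1}$ is surjective, and so is the sum map $N_{j-1}\oplus Rx_j\to N_j$. Each $Rx_j$ is a cyclic submodule of $M$, so $Rx_j\in\mathcal{G}$ by hypothesis, and $F(Rx_j)\in\mathcal{B}_0$ by {\bf O(0)}.

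Next, we induct on $j$ to show $F(N_j)\in\mathcal{B}_0$. Since $F$ is additive, $F(N_{j-1}\oplus Rx_j)\cong F(N_{j-1})\oplus F(Rx_j)$, and this lies in $\mathcal{B}_0$ by induction, because Serre subcategories are closed under extensions and hence finite direct sums. A right exact functor preserves epimorphisms, so $F(N_{j-1}\oplus Rx_j)\to F(N_j)$ is surjective. Therefore $F(N_j)$ is a quotient of an object of $\mathcal{B}_0$, hence lies in $\mathcal{B}_0$. Taking $j=k$ gives $F(N)\in\mathcal{B}_0$, which is {\bf S(0)}.

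The only real obstacle is the finite generation of $N$, which is exactly where Noetherianity enters. Without it, $N$ need not be finitely generated and would not even be an object of $\mathcal{A}$. A second point to check is that we only ever need quotients, never subobjects, on the $\mathcal{B}$ side. This works because in degree $0$ right exactness turns surjections into surjections, so no information about $F_{-1}$ or kernels is required. If the paper's convention for ``cyclic submodule'' refers to homogeneous generators, the homogeneous choice of the $x_j$ above ensures each $Rx_j$ is a graded cyclic submodule as required.
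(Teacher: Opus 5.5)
Your proposal is correct and is essentially the paper's argument. The paper uses Noetherianity to write the submodule as a finite sum of cyclic submodules $Rx_i$, then applies right exactness once to the epimorphism $\bigoplus_i Rx_i\to S$ to get $\bigoplus_i F_0(Rx_i)\twoheadrightarrow F_0(S)$; you run the same surjection one generator at a time via $N_{j-1}\oplus Rx_j\to N_j$, using the same ingredients ({\bf O(0)}, closure of $\mathcal{G}$ under cyclic submodules, and closure of $\mathcal{B}_0$ under finite sums and quotients), and your care about homogeneous generators is a detail the paper leaves implicit.
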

\begin{proof}
Let $S$ be a submodule of the generator $G\in\mathcal{G}$. By Noetherianity, there exist $x_i\in S$ for $1\leq i\leq n$ such that 
\[S=\bigcup_{i=1}^nRx_i,\] where $Rx_i\subseteq G$ is the cyclic submodule generated by $x_i$. By right exactness, then, the canonical map
\[\bigoplus_{i=1}^n F_0(Rx_i)\to F_0(S)\] is an epimorphism. Our assumption on $\mathcal{G}$ and property {\bf O(0)} guarantee that each summand of the source lies in $\mathcal{B}_0$, and, since $\mathcal{B}_0$ is Serre, it follows that the target does as well.
\end{proof}

\section{Application to tame modules}

We now specialize further to the setting of \cite{AnDrummond-ColeKnudsen:AHGBG}, as partially recalled in the introduction. To begin, we record the following, which is Lemma 3.14 of \emph{loc. cit.}

\begin{lemma}\label{lem:quotient partition}
If $P$ is tame with respect to $\graf_v$, then the partition obtained by merging the block of $P$ containing $e$ with the block containing $e'$ is tame with respect to $\graf$.
\end{lemma}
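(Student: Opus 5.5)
Since this is Lemma~3.14 of \cite{AnDrummond-ColeKnudsen:AHGBG}, the simplest course is to cite it. If we want a self-contained argument, I would prove it directly from \cite[Def.~3.2]{AnDrummond-ColeKnudsen:AHGBG}. The working assumption is that tameness of a partition $P$ of $E(\graf)$ means the following: there is an admissible set $W$ of vertices (bounded in size by the suppressed homological parameter) such that $P$ is coarser than the partition $P_W$ of $E(\graf)$ into edge sets of the connected components of $\graf_W$, the graph obtained by exploding the vertices of $W$. The plan is to transport such a witness $W$ from $\graf_v$ to $\graf$.

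First, given $P$ tame for $\graf_v$, I would fix a witness $W\subseteq V(\graf_v)$. The vertex $v$ was already exploded in forming $\graf_v$, and every other vertex of $\graf_v$ is a vertex of $\graf$ with the same valence. So $W$ may be regarded as a set of vertices of $\graf$ satisfying the same admissibility constraints.

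Second, I would compare the two explosions. $(\graf_v)_W$ is obtained from $\graf_W$ by subdividing the fixed edge and exploding the new bivalent vertex $v$. Consequently, the components of $(\graf_v)_W$ map onto those of $\graf_W$, and the only effect of re-gluing at $v$ is to identify the component containing $e$ with the component containing $e'$. On partitions of edges, under the identification of $E(\graf)$ with $E(\graf_v)/(e\sim e')$, this says that $P_W$ for $\graf$ is exactly $P_W$ for $\graf_v$ with the blocks of $e$ and $e'$ merged.

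Third, I would use that merging the blocks containing $e$ and $e'$ preserves refinement. If $P_W^{\graf_v}$ refines $P$, then merging these two blocks in both partitions yields a relation in which $P_W^{\graf}$ refines the merged partition $\overline P$. Hence $\overline P$ is tame with respect to $\graf$.

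The main obstacle is the first step. One must check that every condition imposed on $W$ in \cite[Def.~3.2]{AnDrummond-ColeKnudsen:AHGBG}, such as essentiality of vertices or the cardinality bound tied to the homological degree, is insensitive to passing from $\graf_v$ to $\graf$. If the definition also imposes a condition on the blocks themselves (for instance, a count of blocks or of components meeting $W$), I would verify that merging two blocks can only help. That holds for a bound of the form ``at most'' but would need rechecking for any lower bound.
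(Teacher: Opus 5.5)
Your primary course is exactly what the paper does: it gives no argument of its own and simply records this statement as Lemma~3.14 of \cite{AnDrummond-ColeKnudsen:AHGBG}. Your optional direct sketch is sound under the form of Def.~3.2 you assume: it keeps the witness $W$, observes that re-gluing at $v$ only merges the components of $(\graf_v)_W$ containing $e$ and $e'$, and notes that merging preserves coarsening. One caveat concerns any block-count bound in the definition. Such a bound would follow from $\overline{P}$ being coarser than the edge partition of $\graf_W$, not from the heuristic that merging ``can only help''. That heuristic fails when $e\sim_P e'$ and the bound for $\graf$ is smaller than the bound for $\graf_v$.
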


We also require the following simple observation.

\begin{lemma}\label{lem:cyclic submodules}
Let $P$ be a partition of the set $X$. Every cyclic $\mathbb{Z}[X]$-submodule of $\mathbb{Z}[P]$ is isomorphic to a graded shift of $\mathbb{Z}[P]$.
\end{lemma}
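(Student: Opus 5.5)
The plan is to show that $\mathbb{Z}[P]$ is a graded domain and then to identify any cyclic submodule with a shifted copy of it. Under the quotient $\mathbb{Z}[X]\to\mathbb{Z}[P]$, each generator $x$ is sent to the variable corresponding to its block. Hence $\mathbb{Z}[P]$ is itself a polynomial ring over $\mathbb{Z}$ on the set $P$, and in particular an integral domain. The $\mathbb{Z}[X]$-module structure is simply restriction of scalars along this surjective ring map.

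First, take a cyclic submodule $\mathbb{Z}[X]\cdot m\subseteq\mathbb{Z}[P]$. Since the modules in question are graded, and the categories of Section 3 are categories of graded modules, a cyclic graded submodule is generated by a homogeneous element $m$, say of weight $d$. Because the quotient map is surjective, $\mathbb{Z}[X]\cdot m=\mathbb{Z}[P]\cdot m$, the principal ideal generated by $m$ in $\mathbb{Z}[P]$.

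Second, consider the map $\mathbb{Z}[P][-d]\to\mathbb{Z}[P]$ given by multiplication by $m$; here $[-d]$ denotes the shift placing the generator in weight $d$. This map is $\mathbb{Z}[X]$-linear and weight preserving, and its image is exactly the cyclic submodule. If $m=0$, the submodule is zero, which we either regard as trivial or exclude; it does no harm since $0$ lies in every Serre subcategory. If $m\neq 0$, the map is injective because $\mathbb{Z}[P]$ is a domain. Therefore the cyclic submodule is isomorphic to $\mathbb{Z}[P][-d]$.

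The only point requiring care is the homogeneity of the generator. If one insisted on allowing a nonhomogeneous generator, the submodule would still be isomorphic to $\mathbb{Z}[P]$ as an ungraded module, again by the domain property. In the graded setting of the paper, however, cyclic submodules are generated by homogeneous elements, so the graded statement follows as above.
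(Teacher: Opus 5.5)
Your proof is correct and follows the paper's argument. The paper reduces to the case $X=P$, which is your observation that surjectivity of $\mathbb{Z}[X]\to\mathbb{Z}[P]$ makes cyclic $\mathbb{Z}[X]$-submodules into principal ideals of the polynomial ring $\mathbb{Z}[P]$; it then uses that this ring is an integral domain. You also make explicit the homogeneity of the generator and the excluded zero submodule, which the paper leaves implicit.
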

\begin{proof}
It suffices to consider the case $X=P$, where the claim is equivalent to the fact that $\mathbb{Z}[X]$ is an integral domain.
\end{proof}

In order to apply the general theory developed above, we now identify terms. The Abelian category $\mathcal{A}$ is the category of finitely generated graded $\mathbb{Z}[E(\graf_v)]$-modules (resp. $\mathcal{B}$, $\graf$). The Serre subcategory $\mathcal{A}_0$ is generated by the collection $\mathcal{G}$ of graded shifts of modules of the form $\mathbb{Z}[P]$ with $P$ a tame partition of $E(\graf_v)$, and $\mathcal{B}_0$ is generated by the corresponding collection for $\graf$. The functor $F$ is the change of scalars functor along the natural homomorphism $\mathbb{Z}[E(\graf_v)]\to \mathbb{Z}[E(\graf)]$, i.e., $F(M)=M/(e-e')$.

With these specifications, Proposition \ref{prop:main} is that properties {\bf O(0)} and {\bf O(1)} hold with respect to $\mathcal{A}_0$.

\begin{proof}[Proof of Proposition \ref{prop:main}]
By Proposition \ref{prop:properties}, and since $F_i\equiv0$ for $i>1$, it suffices to establish that properties {\bf O(i)} and {\bf S(i)} hold with respect to $\mathcal{G}$ for $i\in\{0,1\}$. Property {\bf O(0)} is precisely Lemma \ref{lem:quotient partition}, and Proposition \ref{prop:cyclic} and Lemma \ref{lem:cyclic submodules} establish property {\bf S(0)}. As for {\bf O(1)}, evidently we have 
\[F_1(\mathbb{Z}[P])\cong\begin{cases}
\mathbb{Z}[P] &\quad \text{if }e\sim_P e'\\
0 &\quad \text{otherwise},
\end{cases}\] which lies in $\mathcal{B}_0$ by Lemma \ref{lem:quotient partition}. Finally, property {\bf S(1)} follows from property {\bf O(1)} and the fact that $F_1$ is left exact.
\end{proof}

\bibliographystyle{amsalpha}
\bibliography{references}

\providecommand{\bysame}{\leavevmode\hbox to3em{\hrulefill}\thinspace}
\providecommand{\MR}{\relax\ifhmode\unskip\space\fi MR }
\providecommand{\MRhref}[2]{%
  \href{http://www.ams.org/mathscinet-getitem?mr=#1}{#2}
}
\providecommand{\href}[2]{#2}
\begin{thebibliography}{ADCK22}

\bibitem[ADCK22]{AnDrummond-ColeKnudsen:AHGBG}
B.~H. An, G.~C. Drummond-Cole, and B.~Knudsen, \emph{Asymptotic homology of
  graph braid groups}, Geom. Topol. \textbf{26} (2022), 1745--1771.

\end{thebibliography}

\end{document}